\newtheorem{thm}{Theorem}
\newtheorem{lemma}{Lemma}
\newtheorem{pro}{Proposition}
\numberwithin{equation}{section} \setcounter{tocdepth}{1}
\newcommand{\bea}{\begin{eqnarray}}
\newcommand{\eea}{\end{eqnarray}}
\begin{document}
\title[Zebra-percolation on trees]{Zebra-percolation on Cayley trees}

\author{D. Gandolfo,  U. A. Rozikov, J. Ruiz}

 \address{D.\ Gandolfo and J.Ruiz\\Centre de Physique Th\'eorique, UMR 6207,Universit\'es Aix-Marseille
 et Sud Toulon-Var, Luminy Case 907, 13288 Marseille, France.}
\email {gandolfo@cpt.univ-mrs.fr\ \ ruiz@cpt.univ-mrs.fr}

 \address{U.\ A.\ Rozikov\\ Institute of mathematics,
29, Do'rmon Yo'li str., 100125, Tashkent, Uzbekistan.}
\email {rozikovu@yandex.ru}

\begin{abstract} We consider Bernoulli (bond) percolation with parameter $p$ on the Cayley tree of order $k$.
We introduce the notion of zebra-percolation that is
percolation by paths of alternating open and closed edges.
In contrast with standard percolation with critical threshold
at $p_c= 1/k$, we show that zebra-percolation occurs between
two critical values  $p_{{\rm c},1}$ and $p_{{\rm c},2}$
(explicitly given).
We  provide the specific formula of zebra-percolation function.

\end{abstract}
\maketitle

{\bf Mathematics Subject Classifications (2010).} 60K35, 82B43

{\bf{Key words.}} Cayley tree, percolation, zebra-percolation, percolation function.

\section{Introduction and definitions}
Percolation on trees still remains the subject of many open
problems.
The purpose of this paper is to study  the  percolation phenomenon  by paths  of alternating open and
closed bonds. Such paths are called zebra-paths.

We consider the  Cayley tree
$\Gamma^k=(V, L)$  where each vertex has $k + 1$ neighbors with $V$
being the set of vertices and $L$ the set of bonds.
 Bonds are independently open  with probability $p$ (and closed with probability $1-p$).
We let $P_p$ be corresponding probability measure.

On this tree we fix a given vertex $e$ (the root) and consider
the following event 
\begin{equation}
E=\{\mbox{An infinite zebra-path contains the root} \}.
\end{equation}
By path we mean a collection of consecutive bonds (appearing only once) sharing a common endpoint.
  The  zebra-percolation function is defined by
 \begin{equation}
 \zeta_k(p)=P_p(E).
 \end{equation}

\medskip
The paper is organized as follows.
In Section 2 we show that  zebra-percolation occurs in the range  $p\in (p_{{\rm c},1},p_{{\rm c},2})$.  This holds as soon as $k\geq 3$ and the two critical values are explicitly given. Section 3 is devoted to standard percolation. In Section 4 we give a relation between standard percolation and zebra-percolation. The last section is devoted to some discussions and open problems.

\section{Two critical values}

The existence of two critical values is a consequence of the following dichotomy
\begin{thm}\label{t1} The zebra percolation function satisfies

\begin{itemize}
\item[1)] If 
$k^2p(1-p) < 1$, then $\zeta_k(p) = 0$.

\item[2)] If $k^2p(1-p) > 1$, then $\zeta_k(p)>0$.
\end{itemize}
\end{thm}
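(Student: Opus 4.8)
The plan is to treat the two statements by complementary first- and second-moment-type arguments, with the quantity $k^2p(1-p)$ arising naturally as follows. A downward path of $n$ edges from the root is determined by a vertex at distance $n$, and there are $(k+1)k^{n-1}\sim k^n$ such paths; for a \emph{fixed} path the two admissible alternating patterns (open-first or closed-first) make it a zebra-path with probability of order $[p(1-p)]^{n/2}$. Hence the expected number of length-$n$ zebra-paths is of order $\big(k^2p(1-p)\big)^{n/2}$, and the threshold $k^2p(1-p)=1$ separates exponential decay from exponential growth. Note that, the tree being acyclic, every path is automatically self-avoiding, so the ``appearing only once'' condition in the definition of a path causes no difficulty.

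For part 1) I would run the first-moment method. Since an infinite zebra-path through $e$ must contain an infinite zebra-\emph{ray} emanating from $e$ into one of the $k+1$ subtrees, it suffices to bound the number of finite zebra-rays. Taking $n=2m$ even, a direct computation gives
\[
\mathbb{E}_p\big[\#\{\text{zebra-rays of length }2m\text{ from }e\}\big]=2(k+1)k^{2m-1}\,[p(1-p)]^{m}=\frac{2(k+1)}{k}\,\big(k^2p(1-p)\big)^{m}.
\]
When $k^2p(1-p)<1$ this tends to $0$, so by Markov's inequality $P_p(\exists\text{ zebra-ray of length }2m)\to 0$. An infinite zebra-path produces zebra-rays of every length, whence $\zeta_k(p)=P_p(E)=0$.

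For part 2) I would realize the downward exploration of zebra-paths as a two-type Galton--Watson process. Label a vertex reached by the exploration by $o$ or $c$ according to whether the edge to its parent is open or closed; to continue the alternation, an $o$-vertex keeps exactly the children joined to it by a \emph{closed} edge and a $c$-vertex those joined by an \emph{open} edge. Because distinct downward paths use edge-disjoint sets of bonds and $P_p$ is a product measure, the offspring are independent across vertices, so this is a genuine branching process with mean matrix
\[
M=\begin{pmatrix} 0 & k(1-p)\\ kp & 0\end{pmatrix},\qquad \rho(M)=k\sqrt{p(1-p)}.
\]
The process is irreducible and, for $0<p<1$, nondegenerate, and it survives with positive probability exactly when its Perron root satisfies $\rho(M)>1$, i.e.\ $k^2p(1-p)>1$. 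Survival yields an infinite zebra-ray from $e$, hence $\zeta_k(p)>0$. The root, having $k+1$ neighbours and being free to start with either an open or a closed edge, only modifies the (finite, nondegenerate) initial generation and does not affect the survival criterion.

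The routine part is the decay estimate in 1); the genuine obstacle is the survival step in 2). One must check carefully that the exploration really is a multitype Galton--Watson process --- this is where edge-disjointness of downward rays and the product structure of $P_p$ are used --- and one must apply the survival theorem despite the periodicity of $M$ (the matrix has period $2$). The clean way around the periodicity is to pass to even generations: two steps of the process turn an $o$-vertex into its $o$-descendants with mean $k^2p(1-p)$, reducing the question to an ordinary single-type Galton--Watson process whose mean is exactly $k^2p(1-p)$, supercritical precisely in the claimed range.
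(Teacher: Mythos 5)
Your proof is correct, and while part 1) coincides with the paper's argument (the same first--moment/union bound $2(k+1)k^{n-1}(p(1-p))^{[n/2]}\to 0$, together with the observation that an infinite zebra-path through $e$ yields zebra-rays of every length), part 2) takes a genuinely different route. The paper proves 2) by the second-moment method applied to $X_n$, the number of vertices of $W_n$ zebra-connected to the root: writing $E(X_n^2)=\sum_{v,w\in W_n}P_p(U_{v,w})$ with $P_p(U_{v,w})=\mathcal P_n^2\mathcal P_{m_{v,w}}^{-1}$ and summing over the splitting level $m$, it obtains $E(X_n^2)\le E(X_n)^2\sum_{m}(\mathcal P_m|W_m|)^{-1}$, a series that converges precisely when $k^2p(1-p)>1$; this gives $P_p(X_n>0)\ge 1/M$ uniformly in $n$, and monotonicity of the events $\{X_n>0\}$ finishes the proof. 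Your two-type Galton--Watson exploration is a sound alternative: the labelling by the state of the parent edge, the edge-disjointness argument for the branching property, and the passage to even generations to obtain a nondegenerate single-type process with mean $k^2p(1-p)$ (which correctly disposes of the period-$2$ obstruction to the standard positive-regularity hypothesis) are all in order, and survival combined with local finiteness (K\"onig's lemma) produces the infinite zebra-ray. Your approach is arguably the more natural one on a tree and in fact anticipates the paper's Section 4, where $\zeta_k(p)=\theta_{k^2}(p(1-p))$ is derived by collapsing pairs of consecutive edges of $\Gamma^k$ into single edges of a $k^2$-regular tree --- exactly your two-step reduction; it also hands you the survival probability as the nontrivial root of a fixed-point equation, i.e.\ the explicit percolation function of Section 3. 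What the paper's second-moment argument buys instead is robustness: it never uses the exact i.i.d.\ branching structure, gives the quantitative bound $P_p(X_n>0)\ge 1/M$, and is the version of the argument that generalizes to arbitrary trees via the branching number.
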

\begin{proof} 1)
Consider on the tree $\Gamma^k$ all paths of length $n$ starting from the root.
We will denote hereafter by  $W_n$  the set of endpoints of  these paths (excluding the root).
Let   $\mathcal F_n$    be the event that there is a zebra-path of length $n$. 
The probability  $\mathcal P_n$ for such an event is
\begin{equation}\label{p}
\mathcal P_n=\left\{\begin{array}{ll}
2(p(1-p))^{n/2},  \ \ \mbox{if} \ \ n \ \ \mbox{is even}\\[2mm]
(p(1-p))^{(n-1)/2},  \ \ \mbox{if} \ \ n \ \ \mbox{is odd}.
\end{array}
\right.
\end{equation}
 The number of  paths is at most $|W_{n}|=(k+1)k^{n-1}$.
This implies that
$$P_p(\mathcal F_n)\leq 2(k+1)k^{n-1}(p(1-p))^{[n/2]},$$ 
which, under the condition  $k^2p(1-p) < 1$, goes to 0 as $n\to \infty$. Hereafter $[\cdot]$ denotes the integer part. We then get $\zeta_k(p) = 0$.\\

2)  We shall show that if $k^2p(1-p)> 1$,
then the root zebra-percolates with positive probability. Let $X_n$ denote the number of vertices belonging to  $W_{n}$ and zebra-connected to the root. We will apply the method of second moment to the random variable $X_n$ (see, e.g.\cite{S}). We have
\begin{equation}\label{ss}
P(X_n > 0)\geq {E[X_n]^2\over E[X_n^2]}.
\end{equation}

By linearity, we have that
$E(X_{n}) = |W_{n}|\mathcal P_n$. If we can show that for some constant $M$ and for all $n$,
\begin{equation}\label{s51}
E(X^2_n )\leq M E(X_n)^2,
\end{equation}
we would then have that $P_p(X_n > 0)\geq {1\over M}$
for all $n$. The events $\{X_n > 0\}$ are decreasing and so countable additivity yields
$P_p(X_n > 0, \ \ \forall n)\geq {1\over M}$. But the latter event is the same as the event that the root
is percolating and one is done.
We now bound the second moment in order to establish (\ref{s51}). Letting $U_{v,w}$
be the event that both $v$ and $w$ are zebra-connected to the root, we have that
\begin{equation}\label{so}
E(X^2_n) = \sum_{v, w\in W_n}P_p(U_{v,w}).
\end{equation}
Now $P_p(U_{v,w}) = \mathcal P_n^2\mathcal P^{-1}_{m_{v,w}}$, where $m_{v,w}$ is the
level at which paths from $e$ to $v$ and to $w$ split. For a given $v$ and $m$, the number of $w$ with $m_{v,w}$ being
$m$ is at most $|W_n|/|W_m|$. Hence
\begin{equation}\label{so1}
E(X^2_n) \leq |W_n|\sum_{m=0}^n\mathcal P_n^2\mathcal P^{-1}_{m}|W_n|/|W_m|=E(X_n)^2\sum_{m=0}^n{1\over \mathcal P_{m}|W_m|}.
\end{equation}
If $\sum_{m=0}^\infty{1\over \mathcal P_{m}|W_m|}<\infty$,
then we would have (\ref{s51}). If $k^2p(1-p) >1$, then using formula (\ref{p}) one can see that ${1\over \mathcal P_{m}|W_m|}$ decays exponentially like $(k^2p(1-p))^{-m/2}$  giving the desired convergence.
\end{proof}

This theorem gives two critical values for the zebra-percolation which are solutions to $k^2p(1-p)=1$:
$$p_{{\rm c},1}(k)={k-\sqrt{k^2-4}\over 2k}, \ \  p_{{\rm c},2}(k)={k+\sqrt{k^2-4}\over 2k}.$$

Note that if $k\geq 3$, $0< p_{{\rm c},1}(k)<{1\over k}<{1\over 2}<p_{{\rm c},2}(k)<1$.
Moreover $p_{{\rm c},1}(k)+p_{{\rm c},2}(k)=1$. This tells that $p_{{\rm c},1}$ and $p_{{\rm c},2}$ are symmetric with respect to $1/2$.
When $k=2$,  $p_{{\rm c},1}(k)=p_{{\rm c},2}(k)=1/2$ so that no zebra-percolation occurs.

\section{On percolation function} 
Consider standard percolation model on a Cayley tree. Denote by $\theta_k(p)$ the standard percolation function, that is the probability with respect to $P_p$ that there exists an infinite cluster of open edges containing the root. We refer the reader to \cite{G}, \cite{H}, \cite{L}, \cite{P}.

\begin{pro} The function $\theta_k(p)$ satisfies
$$\theta_k(p)=\left\{\begin{array}{ll}
0, \ \ \mbox{if} \ \ \ \ p\leq {1\over k}\\[2mm]
\hat\theta_k(p),\ \ \mbox{if} \ \ p>{1\over k},
\end{array}
\right.
$$ where $\hat \theta_k(p)$ is a unique solution to the following functional equation
\begin{equation}\label{te}
\hat \theta_k(p)=1-\left(1-p\hat \theta_k(p)\right)^k, \ \ p>{1\over k}.
\end{equation}
\end{pro}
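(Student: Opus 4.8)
The plan is to reduce the statement to the analysis of a single scalar fixed-point equation, using the self-similar (branching) structure of the tree, and then to read off the dichotomy and the uniqueness from the geometry of the associated one-dimensional map. First I would replace the infinite percolation event by its finite-depth approximations. Let $a_n = a_n(p)$ be the probability that, in the rooted tree in which every vertex has exactly $k$ children, the root is joined by an open path to some vertex at depth $n$. The root percolates if and only if it reaches depth $n$ for every $n$, and these events decrease in $n$; hence continuity of $P_p$ from above gives $\hat\theta_k(p) = \lim_{n\to\infty} a_n$. The advantage of the $a_n$ is an exact recursion: conditioning on the $k$ edges from the root to its children and using that each child roots an independent isomorphic copy of the same tree, a single child carries the connection to depth $n$ with probability $p\,a_{n-1}$, so all $k$ children fail with probability $(1-p\,a_{n-1})^k$ and
\[
a_n = 1-(1-p\,a_{n-1})^k = f(a_{n-1}), \qquad a_0 = 1,
\]
where $f(x) := 1-(1-px)^k$. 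Thus $a_n = f^{\circ n}(1)$ and the limit $\hat\theta_k(p)$ is a fixed point of $f$, which already produces the asserted functional equation.

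It then remains to analyse $f$ on $[0,1]$ and to decide which fixed point is selected. I would record that $f(0)=0$, that $f$ is strictly increasing with $f'(0)=kp$, and that $f$ is strictly concave, since $f''(x) = -k(k-1)p^2(1-px)^{k-2} < 0$ for $k\geq 2$. For a concave increasing map vanishing at $0$, the graph meets the diagonal in $(0,1]$ at most once: if $kp\leq 1$ the diagonal $y=x$ lies above $f$ on $(0,1]$ and the only fixed point in $[0,1]$ is $0$, whereas if $kp>1$ there is a unique further fixed point $x^\ast\in(0,1)$ (here $f(1)=1-(1-p)^k<1$ forces $x^\ast<1$), with $f(x)>x$ on $(0,x^\ast)$ and $f(x)<x$ on $(x^\ast,1]$. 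This is exactly the threshold $p=1/k$, and it delivers the uniqueness of $\hat\theta_k(p)$.

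Finally I would identify $\lim_n a_n$ with the correct root. Since $f$ is increasing and $f(1)\leq 1$, the sequence $a_n = f^{\circ n}(1)$ is nonincreasing and bounded below, hence converges to a fixed point. When $p\leq 1/k$ the only candidate is $0$, giving $\hat\theta_k(p)=0$. When $p>1/k$, starting from $a_0=1>x^\ast$ and using monotonicity of $f$, an induction shows $a_n>x^\ast$ for all $n$, so the limit is the positive root $x^\ast>0$. Relating $\hat\theta_k$ to the percolation function $\theta_k$ of $\Gamma^k$ is then bookkeeping at the root, whose branches are governed by the same quantity; in particular $\theta_k>0$ precisely when $\hat\theta_k>0$, that is for $p>1/k$. \emph{The step that needs the most care is this identification}: justifying both the reduction $\hat\theta_k=\lim_n a_n$ through continuity from above together with the independence that legitimises the product form of the recursion, and the fact that the monotone iteration from $1$ settles on the nontrivial fixed point rather than collapsing to $0$ when $p>1/k$.
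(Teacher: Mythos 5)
Your proposal is correct, and it reaches the same fixed-point equation and the same concavity analysis of $f(x)=1-(1-px)^k$ as the paper, but by a genuinely different route. The paper writes the equation directly for the infinite-cluster probability: by self-similarity each child $i$ of the root satisfies $P_p(\mathcal B_i)=p\,\theta_k(p)$, inclusion--exclusion over the independent events $\mathcal B_1,\dots,\mathcal B_k$ gives $\theta_k(p)=1-(1-p\theta_k(p))^k$, and a separate lemma classifies the fixed points of $f$ (only $0$ when $kp\leq 1$; $0$ and a unique $\hat\theta\in(0,1]$ when $kp>1$). You instead obtain the equation as the limit of the finite-depth recursion $a_n=f(a_{n-1})$, $a_0=1$, together with continuity from above. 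The payoff of your version is precisely the step you flag as delicate: the paper's argument only shows that $\theta_k(p)$ is \emph{some} fixed point of $f$, and when $p>1/k$ there are two of them; the paper never says why the percolation probability is the positive root $\hat\theta$ rather than $0$ (this is classical, via the supercritical branching-process comparison, but it is not argued there). Your monotone iteration from $a_0=1$, with the induction $a_n>x^\ast$, settles this selection cleanly and makes the proof self-contained. The cost is the extra bookkeeping you mention (continuity from above, independence of the subtrees legitimising the product form), all of which is routine. One small point common to both arguments: the root of $\Gamma^k$ has $k+1$ neighbours while every other vertex has $k$ forward neighbours, so strictly speaking the fixed-point equation governs the forward-percolation probability from a non-root vertex, and the root contributes a factor $1-(1-p\hat\theta_k)^{k+1}$; the paper glosses over this as well, and it does not affect the dichotomy $p\lessgtr 1/k$.
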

\begin{proof} Let $e$ be the root of the Cayley tree, and $S(e)$ the set of direct successors of the root.
Denote by $\mathcal A_i$ the event that vertex $i\in S(e)$ is in an infinite component, which is not connected to $e$.
Then by self-similarity we get
$$P_p(\mathcal A_i)=\theta_k(p), \ \ \mbox{for any} \ \ i\in S(e).$$
Let $\mathcal B_i$ be the event that the edge $\langle e, i\rangle$ is open and $\mathcal A_i$ holds.
Then
$$P_p(\mathcal B_i)=p\, \theta_k(p), \ \ \mbox{for any} \ \ i\in S(e).$$
Since $\mathcal B_1$,  $\mathcal B_2$, $\dots$, $\mathcal B_k$ are independent, using inclusion-exclusion principle, we get
$$\theta_k(p)=P_p\left(\bigcup_{i=1}^k\mathcal B_i\right)=$$
$$\sum_{i=1}^kP_p(\mathcal B_i)-\sum_{{i,j:\atop i<j}}P_p(\mathcal B_i\cap \mathcal B_j)+\sum_{{i,j,q:\atop i<j<q}}P_p(\mathcal B_i\cap \mathcal B_j\cap \mathcal B_q)-\dots+(-1)^{k-1}P_p\left(\bigcap_{i=1}^k\mathcal B_i\right)=$$
$$kp\theta_k(p)-{k \choose 2}(p\theta_k(p))^2+{k \choose 3}(p\theta_k(p))^3-\dots +(-1)^{k-1}(p\theta_k(p))^k=$$
$$1-\left(1-p\theta_k(p)\right)^k.$$
Hence $\theta_k(p)$ is a fixed point of the function
$$f(x)=1-(1-px)^k, \ \ x\in [0,1].$$
The proof is then completed by using the following lemma:
\end{proof}

\begin{lemma} The function $f$ satisfies
\label{l}
\begin{itemize}
\item[i.] If $p\leq {1\over k}$ then the function $f(x)$ has a unique fixed point $0$.
\item[ii.] If $p>{1\over k}$ then the function $f(x)$ has two fixed points $0$ and $\hat \theta$.
\end{itemize}
\end{lemma}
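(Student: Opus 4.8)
The plan is to replace the fixed-point problem by the study of the sign of $g(x)=f(x)-x=1-(1-px)^k-x$ on $[0,1]$, since the fixed points of $f$ are exactly the zeros of $g$. Because $f(0)=1-(1-0)^k=0$, the point $x=0$ is always a fixed point, so the real task is to determine when a second zero of $g$ exists and to show it is then unique. The key structural fact I would exploit is concavity: differentiating gives $f'(x)=kp(1-px)^{k-1}$ and $f''(x)=-k(k-1)p^2(1-px)^{k-2}$, and because $0\le px\le p\le 1$ on the range of interest one has $1-px\ge 0$, so $f''\le 0$, with $f''<0$ on $(0,1)$ as soon as $k\ge 2$. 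Thus $f$, and hence $g$, is strictly concave, and the whole dichotomy is controlled by the initial slope $g'(0)=f'(0)-1=kp-1$, whose sign is precisely governed by the comparison of $p$ with $1/k$.

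For part (i), when $p\le 1/k$ the tangent line to $f$ at the origin is $y=kp\,x$, and strict concavity gives $f(x)<kp\,x\le x$ for every $x\in(0,1]$; hence $g(x)<0$ on $(0,1]$ and $0$ is the only fixed point. For part (ii), when $p>1/k$ one has $g'(0)=kp-1>0$, so $g(x)>0$ for small positive $x$, while at the right endpoint $g(1)=-(1-p)^k\le 0$, strictly negative for $p<1$. The intermediate value theorem then produces a zero $\hat\theta\in(0,1]$, and strict concavity forces this positive zero to be unique: a strictly concave function has at most two zeros, and since $x=0$ is already one of them there is exactly one further fixed point $\hat\theta$.

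The argument is short, so the only delicate points I anticipate are bookkeeping at the boundary rather than any genuine difficulty: one must check that $1-px$ stays nonnegative in order to justify the concavity claim, handle the degenerate case $p=1$ (where $\hat\theta=1$) separately in the endpoint computation of $g(1)$, and state cleanly the elementary fact that a strictly concave function on an interval has at most two zeros, which is what upgrades the existence supplied by the intermediate value theorem to uniqueness of the nonzero fixed point.
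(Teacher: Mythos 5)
Your proof is correct and follows essentially the same route as the paper: both rest on the concavity of $f$ together with the sign of $f'(0)-1=kp-1$ and the endpoint value $f(1)=1-(1-p)^k$. You merely make explicit the steps the paper labels ``easy to see'' (the tangent-line bound for $p\le 1/k$, the intermediate value theorem, and the fact that a strictly concave function has at most two zeros), which is a faithful completion of the same argument.
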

\begin{proof} Note that $0$ is a fixed point of $f$. On the other hand, $f(1)=1-(1-p)^k$ and
$$ f'(x)=kp(1-px)^{k-1}\geq 0,\ \ f''(x)=-k(k-1)p^2(1-px)^{k-2}\leq 0, \ \  x\in [0,1].$$
Hence $f$ is increasing and concave. It is easy to see that $f$ has a unique fixed point $\hat \theta\in (0,1]$ when $f'(0)=kp>1$ and  no fixed point when $f'(0)=kp\leq 1$.
This completes the proof.\end{proof}

Simple computations show that
 $$\theta_2(p)=\left\{\begin{array}{ll}
0, \ \ \mbox{if} \ \ p\leq {1\over 2}\\[2mm]
{2p-1\over p^2},\ \ \mbox{if} \ \ p>{1\over 2}.
\end{array}
\right.$$
and

 $$\theta_3(p)=\left\{\begin{array}{ll}
0, \ \ \mbox{if} \ \ p\leq {1\over 3}\\[2mm]
{2(3p-1)\over p(3p+\sqrt{p(4-3p)})},\ \ \mbox{if} \ \ p>{1\over 3}.
\end{array}
\right.$$

The general solution is given through the inverse function
\begin{pro}\label{l2} The function $\hat \theta_k(p)$, $p>1/k$, $k\geq 2$ is
invertible with inverse
\begin{equation}
\hat\theta_k^{-1}(p)={1-\sqrt[k]{1-p}\over p}.
\end{equation}
\end{pro}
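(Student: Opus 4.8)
The plan is to read the stated formula straight off the defining functional equation (\ref{te}) and then verify that the resulting expression is a genuine two-sided inverse. Write $\theta=\hat\theta_k(p)$ for $p\in(1/k,1]$; by the previous Proposition together with Lemma \ref{l}, this $\theta$ is the \emph{nonzero} fixed point of $f$ in $(0,1]$, so it satisfies
\[
\theta = 1-(1-p\theta)^k .
\]
I would rearrange this to $(1-p\theta)^k = 1-\theta$. Since $p\le 1$ and $\theta\le 1$ we have $0\le p\theta\le 1$, hence $1-p\theta\ge 0$, and I may take the nonnegative real $k$-th root on both sides to get $1-p\theta=\sqrt[k]{1-\theta}$. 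As $\theta>0$, solving for $p$ gives
\[
p=\frac{1-\sqrt[k]{1-\theta}}{\theta}=:g(\theta),
\]
which is exactly the claimed value of $\hat\theta_k^{-1}$ at the point $\theta=\hat\theta_k(p)$.

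This identity already exhibits $g$ as a left inverse of $\hat\theta_k$ on $(1/k,1]$, so $\hat\theta_k$ is injective. To conclude that $g$ is the inverse I would verify the reverse composition. First, for $\theta\in(0,1)$ one needs $g(\theta)>1/k$ so that $\hat\theta_k(g(\theta))$ is even defined; this is the inequality $1-\sqrt[k]{1-\theta}>\theta/k$, i.e. $(1-\theta)^{1/k}<1-\theta/k$, which is precisely Bernoulli's inequality for the exponent $1/k\in(0,1)$ (strict because $0<\theta<1$), while at $\theta=1$ one has $g(1)=1>1/k$. Next, substituting $p=g(\theta)$ into $f$ and using $1-g(\theta)\theta=\sqrt[k]{1-\theta}$ yields $1-(1-g(\theta)\theta)^k=1-(1-\theta)=\theta$, so $\theta$ is a nonzero fixed point of $f$ at $p=g(\theta)$; uniqueness from Lemma \ref{l} then forces $\hat\theta_k(g(\theta))=\theta$. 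Hence $g\circ\hat\theta_k=\mathrm{id}$ and $\hat\theta_k\circ g=\mathrm{id}$, so $\hat\theta_k$ is invertible with inverse $g$.

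The algebra is routine, so I expect the two well-definedness checks to carry all the weight. The sign argument ensuring $1-p\theta\ge 0$ (so that the real $k$-th root recovers $1-p\theta$ and not a spurious branch) is immediate here, but the main obstacle is confirming that the candidate inverse lands in the admissible domain, namely $g(\theta)>1/k$; this is the step where Bernoulli's inequality is indispensable, since without it one cannot even assert that $\hat\theta_k(g(\theta))$ makes sense. As an alternative to the reverse composition, one could instead show directly that $g$ is strictly monotone on $(0,1]$ with $\lim_{\theta\to 0^+}g(\theta)=1/k$ and $g(1)=1$, identifying $\hat\theta_k\colon(1/k,1]\to(0,1]$ as a bijection inverted by $g$; but the substitution argument above is shorter and avoids the derivative computation.
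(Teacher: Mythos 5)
Your proof is correct, and its core step --- solving the fixed-point equation $\theta=1-(1-p\theta)^k$ for $p$ to read off $g(\theta)=\theta^{-1}(1-\sqrt[k]{1-\theta})$ --- is exactly the paper's. Where you diverge is in how injectivity is handled: the paper devotes the first half of its proof to a separate algebraic identity (subtracting the functional equations at $p_1$ and $p_2$ and factoring, equation (\ref{11})) to show $\hat\theta_k(p_1)=\hat\theta_k(p_2)$ forces $p_1=p_2$, and only then observes $p=g(\hat\theta_k(p))$. You get injectivity for free from the left-inverse relation, which makes that computation unnecessary, and you go further than the paper by verifying the reverse composition $\hat\theta_k(g(\theta))=\theta$, including the domain check $g(\theta)>1/k$ via Bernoulli's inequality and an appeal to the uniqueness of the nonzero fixed point in Lemma \ref{l}. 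The paper's argument (injectivity plus a left inverse) already suffices for the stated claim, so your extra work is not strictly needed, but it does pin down the range of $\hat\theta_k$ as all of $(0,1]$, which the paper leaves implicit. One tiny point you could add for completeness: $g(\theta)\le 1$ as well, since $1-\theta\le(1-\theta)^{1/k}$ for $\theta\in[0,1]$, so $g$ genuinely lands in $(1/k,1]$.
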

\begin{proof} First we shall prove that $ \hat\theta_k(p)$ is one-to-one.
 For $p_1,p_2\in (1/k,1)$, we get from equation (\ref{te}) 
 \begin{equation}\label{11}
 \hat\theta_k(p_1)-\hat\theta_k(p_2)=\left[(p_1-p_2)\hat\theta_k(p_1)+
 p_2\left(\hat\theta_k(p_1)-\hat\theta_k(p_2)\right)\right]\cdot \mathcal U,
 \end{equation}
 where $\mathcal U=\sum_{i=0}^{k-1}(1-p_1\hat\theta_k(p_1))^{k-1-i}(1-p_2\hat\theta_k(p_2))^i>0$. 

 Since $ \hat\theta_k(p)>0$ for any $p>1/k$, if $\hat\theta_k(p_1)=\hat\theta_k(p_2)$ then
 from equality (\ref{11}) we get $p_1=p_2$. Hence $\hat\theta_k(p)$ is one-to-one, i.e. invertible.

Solving  the equation $x=1-(1-px)^k$ with respect to $p$ for $x\in [0,1]$, we get
$p=g(x)=x^{-1}(1-\sqrt[k]{1-x})$. Now by (\ref{te}) we have $p=g(\hat\theta_k(p))$ for any $p>{1\over k}$. Hence $g$ is the inverse function of $\hat\theta_k(p)$.
\end{proof}

Note that the function $\theta_k(p)$ has following properties:
\begin{enumerate}
\item
$\theta_k(p)$ is nondecreasing in $p$
\item
$\theta_k(1/k)=0$, $\theta_k(1)=1$, $\theta_k(p)\ne 1$ for any $p<1$
\item
$\theta_k(p)$ is differentiable for any $p\ne {1/k}$.
\end{enumerate}

\section{Relation between standard and zebra percolation}

Starting from the Cayley tree $\Gamma^k=(V,L)$, we construct a new tree $\hat\Gamma^k=(\hat V, \hat L)$ as follows (see Fig. 1)
$$\hat V=\bigcup_{m=0}^\infty W_{2m},\ \ \hat L=\bigcup_{m=0}^\infty\{(x,z): x\in W_{2m}, z\in S(y), y\in S(x)\},$$
where $S(x)$ denotes the set of direct successors of $x$.

It is easy to see that $\hat\Gamma^k$ is a regular tree of order $k^2$ (except on the root).

We denote by $l$ an edge in $L$ and by $\lambda$ and edge in $\hat L$. Note that any edge $\lambda\in \hat L$ can be represented by two edges $l_1, l_2\in L$, which have a common endpoint. We write this as $\lambda=(l_1,l_2)$,
moreover $l_1$ is the closer to the root of the Cayley tree.

Now for a given configuration $\sigma\in \Omega=\{0,1\}^L$ we define a configuration $\phi\in \Phi=\{-1,0,+1\}^{\hat L}$ as the following (see Fig. 1)
$$\phi(\lambda)=\phi_\sigma(\lambda)=\left\{\begin{array}{lll}
-1, \ \ \mbox{if} \ \ \sigma(l_1)=0, \, \sigma(l_2)=1\\[2mm]
0, \ \ \ \ \mbox{if} \ \ \sigma(l_1)=\sigma(l_2)\\[2mm]
1, \ \ \ \ \mbox{if} \ \ \sigma(l_1)=1, \, \sigma(l_2)=0.
\end{array}\right.$$
\begin{center}
\includegraphics[width=14cm]{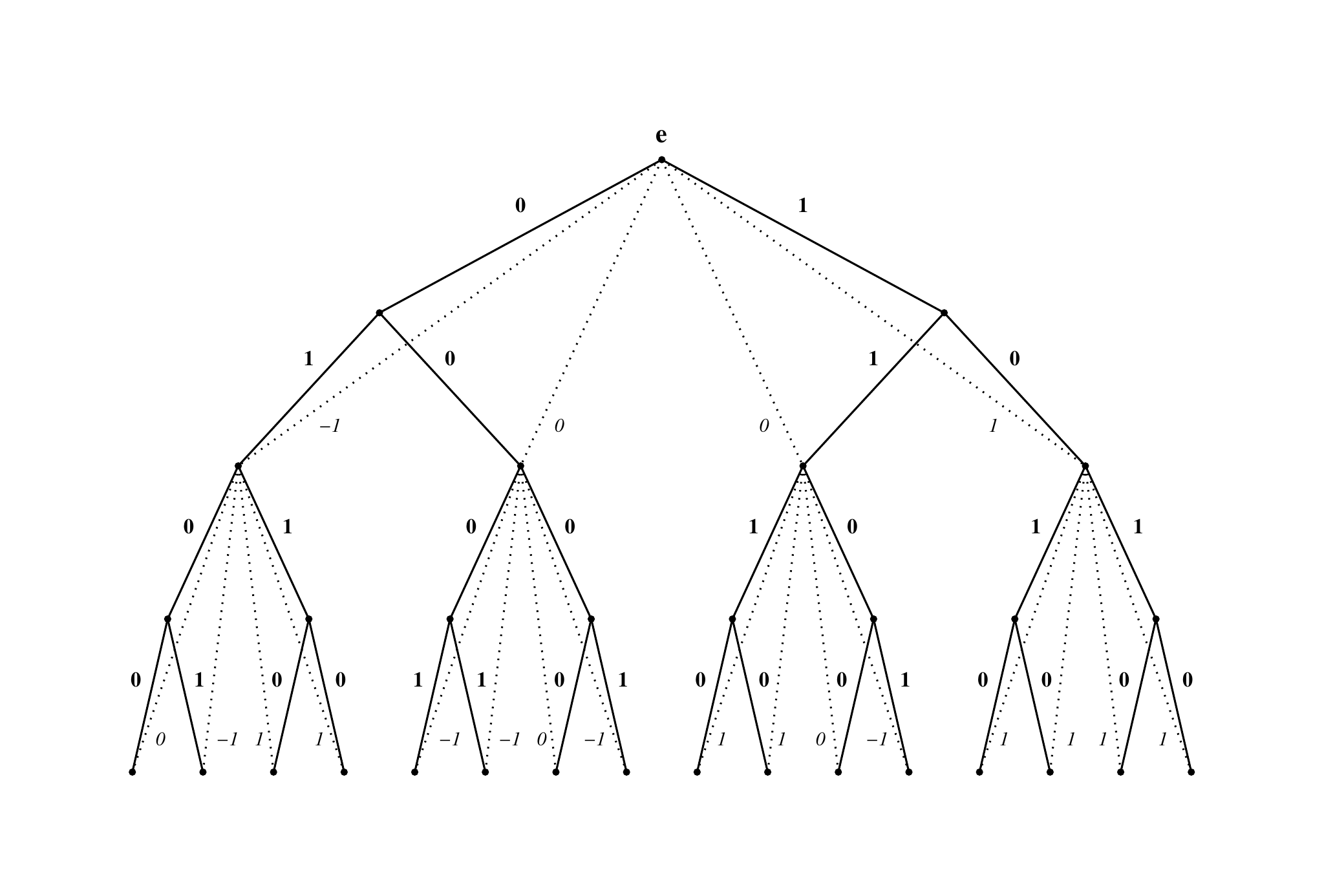}

{\footnotesize \noindent Fig.~1.
Correspondence between configurations $\sigma$ on $\Gamma^2$ (solid lines) and $\phi$ on $\hat{\Gamma}^2$ (dotted lines).}
\end{center}

A given configuration $\phi$ divides the set $\hat L$ into clusters of $(+)$ and $(-)$ bonds. \\

We speak of the edge $\lambda\in \hat L$ as being open with probability $q$ (in $\phi$) if
$\phi(\lambda)\ne 0$ and as being closed if $\phi(\lambda)=0$.
Let $\mu_q$ be corresponding product measure.
Denote
\begin{equation}\label{t}
\theta_{k^2}(q)=\mu_q(|\hat C|=\infty).
\end{equation}

By our construction the following is obvious

\begin{pro}\label{p2} The functions $\zeta_k(p)$ and $\theta_k(p)$  are related by
\begin{equation}\label{zt}
\zeta_k(p)=\theta_{k^2}(p(1-p)).
\end{equation}
\end{pro}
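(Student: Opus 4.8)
The plan is to read the identity off the configuration map $\sigma\mapsto\phi_\sigma$, turning the zebra problem on $\Gamma^k$ into an ordinary connectivity problem on $\hat\Gamma^k$. First I would verify that the event $E$ is carried bijectively onto an open-cluster event on $\hat\Gamma^k$. An infinite zebra-path through $e$ meets the levels $W_0,W_1,W_2,\dots$ with strictly alternating edge-states; grouping its bonds in consecutive pairs $(l_1,l_2)$ shows that each pair has $\sigma(l_1)\neq\sigma(l_2)$, i.e.\ the corresponding $\lambda=(l_1,l_2)\in\hat L$ satisfies $\phi(\lambda)\neq 0$. Conversely, a path in $\hat\Gamma^k$ whose edges are all nonzero \emph{and carry a common sign} lifts to a genuine zebra-path, since a sign change $+1\to-1$ along the path would force two equal consecutive states $\sigma(l_2)=\sigma(l_1')$ and destroy the alternation. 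Hence $E$ corresponds to the presence of an infinite monochromatic cluster at the root of $\hat\Gamma^k$, and the involution $\sigma\mapsto 1-\sigma$ (which swaps the two signs and replaces $P_p$ by $P_{1-p}$ while fixing $p(1-p)$) makes the $(+)$- and $(-)$-problems interchangeable.

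Next I would identify the relevant edge-weight. Because $l_1$ and $l_2$ are distinct bonds and $\sigma$ is i.i.d.\ Bernoulli$(p)$, the induced single-edge law is $P_p(\phi(\lambda)=+1)=P_p(\phi(\lambda)=-1)=p(1-p)$ and $P_p(\phi(\lambda)=0)=1-2p(1-p)$. Along one prospective ray the four bonds carried by two consecutive $\hat\Gamma^k$-edges are all different, so the events ``$\lambda$ has the prescribed sign'' occur independently with probability $p(1-p)$ each; this is exactly the weight defining $\mu_{p(1-p)}$ on the order-$k^2$ tree. Thus the probability that $e$ lies in an infinite monochromatic cluster should be governed by the same fixed-point equation (\ref{te}) that characterises $\theta_{k^2}$, now at $q=p(1-p)$, which would yield $\zeta_k(p)=\theta_{k^2}(p(1-p))$.

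The step that is not mere bookkeeping, and that I expect to be the crux, is the \emph{independence of the $\hat\Gamma^k$-edges} under the induced law. The $k$ edges $\lambda=(l_1,l_2)$ leaving a fixed $x\in\hat V$ through one intermediate vertex $y\in S(x)$ all share the inner bond $l_1=\langle x,y\rangle$; their signs are therefore correlated (they can be simultaneously $+1$ only when $\sigma(l_1)=1$), so the push-forward of $P_p$ is \emph{not} the product measure $\mu_q$, and a bare ``equal marginals'' argument is insufficient. My plan is to run the cluster-exploration on $\hat\Gamma^k$ so that each inner bond $\sigma(l_1)$ is examined exactly once: conditioned on the inner bonds already revealed, the outer bonds $\sigma(l_2)$ are fresh and independent, and I would then check that the resulting branching recursion for the monochromatic cluster reduces, level by level, to $\theta_{k^2}=1-(1-q\,\theta_{k^2})^{k^2}$ at $q=p(1-p)$. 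Verifying that this correlated recursion collapses to the independent one is the decisive point; granting it, the uniqueness of the positive fixed point from Lemma \ref{l} forces the two percolation functions to coincide, giving (\ref{zt}).
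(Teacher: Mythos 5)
You have correctly located the crux, but the step you defer at the end --- checking that the correlated exploration on $\hat\Gamma^k$ ``collapses to the independent one'' --- is the entire content of the proposition, and it in fact fails; the paper offers no argument either (the statement is declared to follow from the construction), so there is no hidden lemma you are missing. Your reduction of $E$ to monochromatic percolation on $\hat\Gamma^k$ and your computation of the one-edge marginal $p(1-p)$ are both right, and so is your warning that the $k$ edges of $\hat L$ through a common intermediate vertex share the inner bond and are positively correlated. To see that this correlation survives into the answer, let $\alpha$ be the probability that a vertex $x\in\hat V$ starts an infinite all-$(+)$ ray in its subtree (equivalently, an infinite zebra-ray whose first bond is open). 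Revealing the inner bonds first gives
\[
\alpha=1-\Bigl[(1-p)+p\bigl(1-(1-p)\alpha\bigr)^{k}\Bigr]^{k},
\]
while the product measure $\mu_q$ with $q=p(1-p)$ gives $\theta=1-(1-q\theta)^{k^{2}}$. Matching the two would require $(1-p)+p(1-(1-p)\alpha)^{k}=(1-p(1-p)\alpha)^{k}$; the terms linear in $\alpha$ agree (both equal $-kp(1-p)\alpha$), but the quadratic terms are $\binom{k}{2}p(1-p)^{2}\alpha^{2}$ versus $\binom{k}{2}p^{2}(1-p)^{2}\alpha^{2}$, so the two fixed-point equations differ for every $k\ge 2$ and $p\in(0,1)$. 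Concretely, at $k=3$, $p=1/2$ the zebra recursion gives a root-percolation probability close to $0.99$, whereas $\theta_{9}(1/4)\approx 0.90$.

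What does survive is the linear (branching-rate) agreement: the mean number of monochromatic offspring per vertex of $\hat\Gamma^k$ is $k^{2}p(1-p)$, the same as under $\mu_{p(1-p)}$, which is why the critical points $p_{{\rm c},1},p_{{\rm c},2}$ and the qualitative dichotomy of Theorem \ref{t1} are unaffected. But the exact identity (\ref{zt}) cannot be reached by your route, and your own analysis points to what can be proved instead: the coupled system $a=1-(1-pb)^{k}$, $b=1-\bigl(1-(1-p)a\bigr)^{k}$ for zebra-rays starting with an open (resp.\ closed) bond, combined at the root by inclusion--exclusion over the non-disjoint events ``infinite $(+)$-cluster'' and ``infinite $(-)$-cluster'' (a point your sketch also leaves open). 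I would recommend replacing the deferred verification by this explicit system, or by the weaker but correct statement that $\zeta_k(p)>0$ if and only if $\theta_{k^2}(p(1-p))>0$.
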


This proposition provides an alternative proof of Theorem \ref{t1}.
By properties of $\theta_{k^2}(p)$ we get $\zeta_k(p)=0$ iff $p(1-p)\leq 1/k^2$ and
$\zeta_k(p)>0$ iff $p(1-p)>1/k^2$. 

The two critical values $p_{{\rm c},1}$ and $p_{{\rm c},2}$ are the solutions of $p(1-p)=1/k^2$.

By Proposition \ref{p2} we get
\begin{thm}\label{t3} The function $\zeta_k(p)$ has the  following properties:
\begin{itemize}
\item[a.] $\zeta_k(p)$ is increasing in $p\in [0,1/2]$, and deacreasing in  $p\in [1/2, 1]$.

\item[b.] $\zeta_k(p_{{\rm c},1})=\zeta_k(p_{{\rm c},2})=0$,  $\max_p\zeta_k(p)=\zeta_k(1/2)=\theta_{k^2}(1/4).$

\item[b.]  $\zeta_k(p)$ is differentiable on $[0,1]\setminus \{p_{{\rm c}, 1}, p_{{\rm c}, 2}\}$.

\item[c.] there is no zebra-percolation for $k=2$.

\end{itemize}
\end{thm}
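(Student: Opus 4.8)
The plan is to reduce everything to Proposition~\ref{p2}, which writes $\zeta_k$ as the composition $\zeta_k(p)=\theta_{k^2}(h(p))$ with $h(p)=p(1-p)$, and then to read off the four assertions from the behaviour of $\theta_{k^2}$ recorded just before the theorem together with the elementary shape of the parabola $h$. The function $h$ is a polynomial, hence smooth on $[0,1]$; it is strictly increasing on $[0,1/2]$, strictly decreasing on $[1/2,1]$, attains its maximum $h(1/2)=1/4$ at $p=1/2$, and satisfies $h(p_{{\rm c},1})=h(p_{{\rm c},2})=1/k^2$ by the defining equation $p(1-p)=1/k^2$ of the two critical values. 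Combined with the monotonicity, the positivity threshold $\theta_{k^2}(1/k^2)=0$, and the differentiability of $\theta_{k^2}$ away from $1/k^2$, these facts yield each item.

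For (a) I would argue that on $[0,1/2]$ the composition of the increasing $h$ with the nondecreasing $\theta_{k^2}$ is nondecreasing, and on $[1/2,1]$ the composition of the decreasing $h$ with the nondecreasing $\theta_{k^2}$ is nonincreasing; if strict monotonicity on the supercritical range is wanted, I would invoke the invertibility of $\hat\theta_{k^2}$ from Proposition~\ref{l2}, which forces $\hat\theta_{k^2}$ to be strictly increasing, so that $\zeta_k$ is strictly increasing on $(p_{{\rm c},1},1/2)$ and strictly decreasing on $(1/2,p_{{\rm c},2})$. For the first part of (b), I would substitute $h(p_{{\rm c},1})=h(p_{{\rm c},2})=1/k^2$ into $\zeta_k(p)=\theta_{k^2}(h(p))$ and use $\theta_{k^2}(1/k^2)=0$; the maximum claim follows because $h$ is maximised at $p=1/2$ and $\theta_{k^2}$ is nondecreasing, giving $\max_p\zeta_k(p)=\theta_{k^2}(h(1/2))=\theta_{k^2}(1/4)$.

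For the differentiability statement I would apply the chain rule: since $h$ is everywhere differentiable and $\theta_{k^2}$ is differentiable at every argument different from $1/k^2$, the composite $\zeta_k=\theta_{k^2}\circ h$ is differentiable at every $p$ with $h(p)\ne 1/k^2$, that is, at every $p\in[0,1]\setminus\{p_{{\rm c},1},p_{{\rm c},2}\}$. Finally, for the case $k=2$ I would note that $1/k^2=1/4=\max_{[0,1]}h$, so $h(p)\le 1/4$ for all $p$ with equality only at $p=1/2$; since $\theta_4$ vanishes on $[0,1/4]$, it follows that $\zeta_2(p)=\theta_4(h(p))=0$ for every $p$. Equivalently, the two critical values coincide at $1/2$ and the maximal value $\theta_4(1/4)$ is itself $0$.

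The computations are entirely routine; the only place demanding a little care is differentiability at $p=1/2$, where $h'(1/2)=0$. What matters there is that the argument $h(1/2)=1/4$ is strictly larger than $1/k^2$ for $k\ge 3$, so $\theta_{k^2}$ is differentiable at $1/4$ and the chain rule applies; the vanishing of $h'(1/2)$ merely forces $\zeta_k'(1/2)=0$ and is not an obstruction. The genuinely delicate point, and the one I would check most carefully, is to ensure the wording ``increasing/decreasing'' in (a) is read as monotone (nondecreasing/nonincreasing) rather than strictly monotone across the whole interval, since $\zeta_k$ is flat and equal to $0$ on $[0,p_{{\rm c},1}]$ and on $[p_{{\rm c},2},1]$.
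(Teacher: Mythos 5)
Your proposal is correct and follows exactly the route the paper takes: the paper derives Theorem~\ref{t3} directly from Proposition~\ref{p2} via $\zeta_k=\theta_{k^2}\circ h$ with $h(p)=p(1-p)$ and the listed properties of $\theta_{k^2}$, which is precisely your argument (you merely write out the routine details the paper omits). Your cautionary remarks about differentiability at $p=1/2$ and the nonstrict reading of ``increasing/decreasing'' are sensible and consistent with the paper's intent.
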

The graphs of functions $\theta_k(p)$, $\theta_{k^2}(p)$ and $\zeta_k(p)$ are presented for $k=3$ in Fig.2.

\begin{center}
\includegraphics[width=10cm]{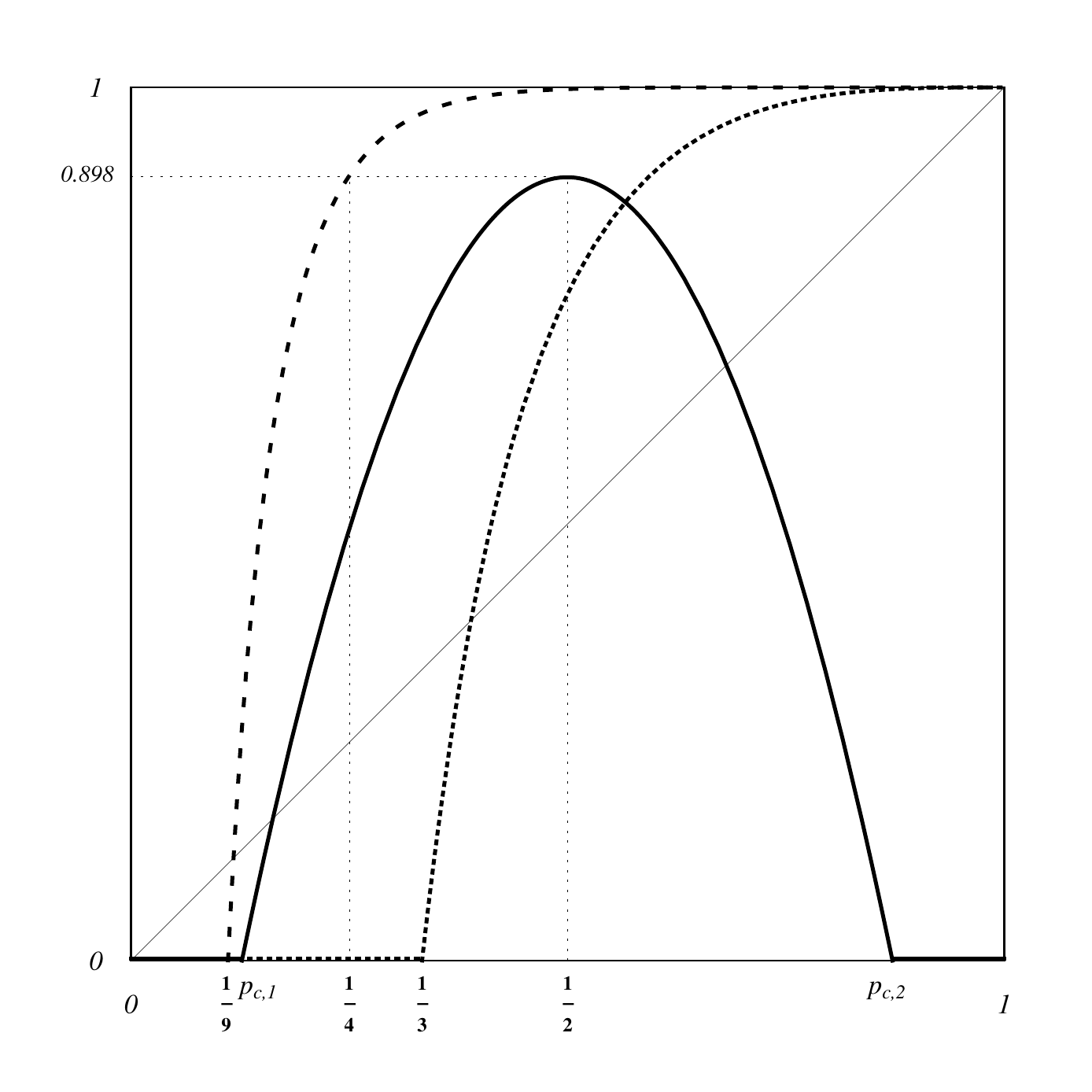}

{\footnotesize \noindent Fig.~2.
Graphs of $\theta_3(p)$ (dashed line), $\theta_9(p)$  (dotted line), and $\zeta_3(p)$  (solid line).}
\end{center}

\section{Open problems}

An interesting problem in percolation theory is to study the distribution of the
number of vertices in clusters and geometric properties
of open clusters when $p$ is close to the critical value $p_c$. It
is believed that some of these properties are
universal, i.e., depend only on the dimension
of the graph. Some open problems are in order.\\

{\bf Problem 1.} {\sl Study distribution of the
number of vertices and geometric properties of the zebra-connected clusters (made of zebra paths) when $p$ is close to $p_{{\rm c},1}$ or $p_{{\rm c},2}$}.\\

It is known that $\mathbb Z^d$ for large $d$ behaves in many respects like a
regular tree. \\

{\bf Problem 2.} {\sl Define a notion of zebra-connected component on $\mathbb Z^d$.  Find the critical value(s) for zebra-percolation on} $\mathbb Z^d$.\\

When an infinite cluster exists, it is natural to ask how many there are (see e.g. \cite{Be}). \\

{\bf Problem 3.}  {\sl How many infinite cluster exist for zebra-percolation} ?

\bigskip

\section*{ Acknowledgements}

 U.\,Rozikov thanks CNRS for financial support and  the Centre de Physique Th\'eorique - Marseille for kind hospitality during his visit (September-December 2012).
\newpage


\begin{thebibliography}{99}
\bibitem{Be} Beffara, V., Sidoravicius, V.: Probability theory. Encyclopedia Math. Phys. 21--28 (2006)

\bibitem{G} Grimmett, G.: Percolation, 2nd ed. Springer, Berlin. (1999)

\bibitem{H}  van der Hofstad, R.: Percolation and random graphs.  {\it New perspectives in stochastic geometry},
 173--247, Oxford Univ. Press, Oxford, (2010).

\bibitem{L} Lyons, R.:  Phase transitions on nonamenable graphs. J. Math. Phys.  {\bf 41}, 1099--1126 (2000)

\bibitem{P} Peres, Y.:  Probability on trees: an introductory climb.  Lectures on probability theory and statistics (Saint-Flour, 1997),  193--280, Lecture Notes in Math., 1717, Springer, Berlin, (1999)

\bibitem{S} Steif, J. E.: A mini course on percolation theory. http://www.math.chalmers.se/~steif/perc.pdf

\end{thebibliography}
\end{document}